\title{The Distinguishing Index of Mycielskian Graphs}
 \author{Rowan Kennedy \thanks{{\tt wilsonan3@mail.gvsu.edu}, Grand Valley State University, Allendale Charter Township, MI},  Lauren Keough  \thanks{{\tt keoulaur@gvsu.edu}, Grand Valley State University, Allendale Charter Township, MI}, Mallory Price \thanks{{\tt pricemal@gvsu.edu}, Grand Valley State University, Allendale Charter Township, MI}, Nick Simmons,  \thanks{{\tt simmonni@mail.gvsu.edu}, Grand Valley State University, Allendale Charter Township, MI}, Sarah Zaske \thanks{{\tt zaskes@mail.gvsu.edu}, Grand Valley State University, Allendale Charter Township, MI},  }
 \date{}
\theoremstyle{plain}
\newtheorem{thm}{Theorem}
\newtheorem{lem}[thm]{Lemma}
\newtheorem{cor}[thm]{Corollary}
\newtheorem{conj}[thm]{Conjecture}
\theoremstyle{definition}
\newcommand\dist{\operatorname{Dist}}
\newcommand\diste{\operatorname{Dist^{\prime}}}
\newcommand\cbar{\overline{c}}
\definecolor{newpurple}{RGB}{220,55,245}
\begin{document}

\maketitle

\begin{abstract}

 
The distinguishing index gives a measure of symmetry in a graph. Given a graph $G$ with no $K_2$ component, a distinguishing edge coloring is a coloring of the edges of $G$ such that no non-trivial automorphism preserves the edge coloring. The distinguishing index, denoted $\operatorname{Dist^{\prime}}(G)$, is the smallest number of colors needed for a distinguishing edge coloring. The Mycielskian of a graph $G$, denoted $\mu(G)$, is an extension of $G$ introduced by Mycielski in 1955. In 2020, Alikhani and Soltani conjectured a relationship between $\diste(G)$ and $\diste(\mu(G))$.  We prove that for all graphs $G$ with at least 3 vertices, no connected $K_2$ component, and at most one isolated vertex, $\operatorname{Dist^{\prime}}(\mu(G)) \le \operatorname{Dist^{\prime}}(G)$, exceeding their conjecture. We also prove analogous results about generalized Mycielskian graphs. Together with the work in 2022 of Boutin, Cockburn, Keough, Loeb, Perry, and Rombach this completes the conjecture of Alikhani and Soltani.
    
\end{abstract}

 \textit{Keywords}: distinguishing number, graph distinguishing, graph automorphism, Mycielskian graph
 
 \textit{MSC 2020}: 05C15


\section{Introduction}

The Mycielskian of a graph $G$, denoted $\mu(G)$ was first introduced by Jan Mycielski in 1955 to show that there exist triangle-free graphs of arbitrarily large chromatic numbers \cite{M55}. Since then, there have been many counting parameters, such as variations of chromatic numbers and domination numbers studied about Mycielskian graphs, e.g., \cite{FiMcBo1998,LWLG2006,ChXi2006,BR2008,SV2023,BCKLPR24}. 
For a graph $G$ with vertices $v_1, \dots , v_n$, the \emph{Mycielskian of G}, denoted $\mu(G)$, has vertices $v_1, \dots, v_n, u_1, \dots, u_n, w$. For each edge $v_iv_j$ in $G$, the graph $\mu(G)$ has edges $v_iv_j, v_iu_j$, and $u_iv_j$. For each $u_i$ with $1\le i \le n$, $\mu(G)$ has edge $u_iw$. We call $v_1, \dots , v_n$ the \emph{original vertices}, $u_1, \dots , u_n$ the \emph{shadow vertices}, and $w$ the \emph{root}. The Mycielskian of $K_{1,3}$, $\mu(K_{1,3})$ is shown in Figure~\ref{fig:muK13}.

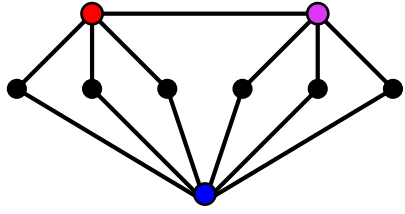
\begin{figure}
    \centering
     \begin{tikzpicture}[style=ultra thick, scale=1.0]
    \draw (1.5,2)--(-1.5,2);
    
    \draw(1.5,2)--(.5,1);
    \draw(0,-0.5)--(.5,1);
    \draw(1.5,2)--(1.5,1)--(0,-0.5);
    \draw(1.5,2)--(2.5,1)--(0,-0.5);

    \filldraw (.5,1) circle (3pt);
    \filldraw (1.5,1) circle (3pt);
    \filldraw (2.5,1) circle (3pt);
    
    \draw (-.5,1)--(0,-0.5);
    \draw(-1.5,2)--(-.5,1);
    \draw (-1.5,2)--(-1.5,1)--(0,-0.5);
    \draw (-1.5,2)--(-2.5,1)--(0,-0.5);
    \filldraw (-.5,1) circle (3pt);
    \filldraw (-1.5,1) circle (3pt);
    \filldraw (-2.5,1) circle (3pt);

    \draw[fill=blue,line width=1pt] (0,-0.4) circle (4pt);
    \draw[fill=newpurple,line width=1pt] (1.5,2) circle (4pt);
    \draw[fill=red,line width=1pt] (-1.5,2) circle (4pt);
\end{tikzpicture} \caption{The Mycielskian of $K_{1,3}$, $\mu(K_{1,3})$. The blue vertex is the unique vertex of maximum degree in the original graph, the pink vertex is its shadow, and the red vertex is the root.}
     \label{fig:muK13}
\end{figure}

The distinguishing number and distinguishing index, which are ways of measuring symmetries in a graph, are two counting parameters that  have been studied in relation to Mycielskian graphs \cite{AS2020, BCKLPR22}.  To define these terms we first need to define a graph automorphism. A \emph{graph automorphism} is a bijective function $\phi : V(G) \xrightarrow{} V(G)$ such that $x$ is adjacent to $y$ if and only if $\phi(x)$ is adjacent to $\phi(y)$ for all $x,y \in V(G)$. 
A \emph{distinguishing vertex coloring} of a graph $G$ is a coloring of the vertices of $G$ such that no non-trivial automorphism preserves the vertex coloring. The \emph{distinguishing number}, denoted $\dist(G)$ is the smallest number of colors needed for a distinguishing vertex coloring of $G$. Babai introduced this idea, calling it an asymmetric coloring, in \cite{B97}. Albertson and Collins independently introduced the same idea and the name distinguishing coloring in \cite{AC96}. 

Similarly, a \emph{distinguishing edge coloring} of a graph $G$ is a coloring of the edges of $G$ such that no non-trivial automorphism preserves the edge coloring and the  \emph{distinguishing index}, denoted $\diste(G)$ is the smallest number of colors needed for a distinguishing edge coloring of $G$.  The distinguishing index was defined by Kalinowski and Pil$\acute{\text{s}}$niak in \cite{KP15}. Note that graphs with a connected component that is $K_2$ do not have a distinguishing index, since any edge coloring of a $K_2$ has a nontrivial color-preserving automorphism. Similarly, graphs with more than one isolated vertex do not have a distinguishing index. Therefore, throughout the paper we assume all graphs $G$ do not have a $K_2$ component, and have at most one isolated vertex.

In 2020 Alikhani and Soltani proved that if $G$ has no vertices $u$ and $v$ such that $N(u)=N(v)$ then $\dist(\mu(G))\le \dist(\mu(G)) + 1$ whenever $G$ has at least $2$ vertices, and $\diste(\mu(G))\le \diste(\mu(G)) + 1$ whenever $G$ has at least $3$ vertices and no connected component $K_2$. They then proposed Conjecture~\ref{conj:AS}.

\begin{conj}\cite{AS2020}\label{conj:AS}
    Let $G$ be a connected graph of order $n \ge 3$. Then $\dist(\mu(G)) \le \dist(G)$ and $\diste(\mu(G)) \le \diste(G)$, except for a finite number of graphs.
\end{conj}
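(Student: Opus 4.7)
My plan is to take a distinguishing edge coloring $c$ of $G$ using $k = \diste(G)$ colors and lift it to an edge coloring $c'$ of $\mu(G)$ that uses no new colors. The natural construction is to set
\[
    c'(v_iv_j) \;=\; c'(v_iu_j) \;=\; c'(u_iv_j) \;:=\; c(v_iv_j) \quad \text{for each } v_iv_j \in E(G),
\]
and to color every spoke $u_iw$ with one fixed color, say color $1$. I would then argue that any edge-color-preserving automorphism $\phi$ of $(\mu(G),c')$ must be the identity, which would give $\diste(\mu(G))\le \diste(G)$ and hence establish the edge-coloring part of the conjecture (in fact without the ``finitely many exceptions'' caveat).

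The first key step is to show $\phi(w)=w$. Since $N_{\mu(G)}(w)=\{u_1,\ldots,u_n\}$ is an independent set of size $n$, and a generic original or shadow vertex does not have an independent open neighborhood of that size, I would characterize $w$ by a structural invariant — for example, as the unique vertex whose open neighborhood is an independent set of the appropriate cardinality — and then use this to pin down $\phi(w)=w$. Once that is established, the shadow set $\{u_1,\ldots,u_n\}=N(w)$ is $\phi$-invariant, and so is the original set $V(G)=V(\mu(G))\setminus(\{w\}\cup\{u_1,\ldots,u_n\})$; in particular $\phi$ preserves the tripartition.

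Write $\phi(v_i)=v_{\sigma(i)}$ and $\phi(u_i)=u_{\pi(i)}$ for permutations $\sigma,\pi$. Color preservation on the edges $v_iv_j$ inside the embedded copy of $G$ forces $\sigma$ to be an edge-color-preserving automorphism of $G$, so $\sigma=\mathrm{id}$ by the distinguishing property of $c$. Next, color preservation on the edges $v_iu_j$ and $u_iv_j$ yields
\[
    c(v_iv_j) \;=\; c(v_iv_{\pi(j)}) \;=\; c(v_{\pi(i)}v_{\pi(j)})
\]
for every $v_iv_j \in E(G)$. The first equality, combined with the fact that $\phi$ is a graph automorphism, forces $N_G(v_j)=N_G(v_{\pi(j)})$ for each $j$, which in turn makes $\tau(v_i):=v_{\pi(i)}$ an automorphism of $G$; the second equality then says $\tau$ preserves $c$. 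Since $c$ is distinguishing, $\tau=\mathrm{id}$, hence $\pi=\mathrm{id}$, and $\phi$ is trivial.

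The principal obstacle I anticipate is rigorously establishing $\phi(w)=w$ across all admissible $G$. Highly symmetric graphs, graphs with many twins, graphs with an isolated vertex, and small-order graphs may admit automorphisms of $\mu(G)$ that move $w$ or exchange original and shadow vertices, and these pathologies must be ruled out, possibly by a case analysis or by refining the spoke coloring. The boundary case $k=1$ (where $G$ is asymmetric) also merits separate attention: the claim reduces to showing that $\mu(G)$ inherits asymmetry from $G$, which is expected to hold for $|V(G)|\ge 3$ but likely requires a short structural verification.
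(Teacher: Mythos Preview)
Your mimic coloring $c'$ and the subsequent level-by-level argument are exactly what the paper does in its non-star case (Theorem~\ref{thm:notstars}): restrict to level~$0$ to force $\sigma=\mathrm{id}$, then use the twin structure to force $\pi=\mathrm{id}$. Your $\tau$-argument is a mild repackaging of the paper's Lemma~\ref{lem:twincolors}; both are correct.

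The genuine gap is precisely the one you flag: fixing $w$. Your proposed invariant---``unique vertex whose open neighborhood is independent of size $n$''---fails exactly for $G=K_{1,m}$, since in $\mu(K_{1,m})$ the shadow $u_0$ of the center also has an independent open neighborhood $\{v_1,\dots,v_m,w\}$ of size $m{+}1=n$, and there \emph{is} a graph automorphism swapping $w$ and $u_0$. The paper does not try to prove $\phi(w)=w$ from scratch; it imports this as Lemma~\ref{lem:autos} from \cite{BCKLPR22}, which says $w$ can be moved \emph{only} when $G=K_{1,m}$, and then handles stars separately (Theorem~\ref{thm:smallmuK1m}) with a different coloring based on ordered pairs of colors, obtaining the sharper value $\diste(\mu(K_{1,m}))=\lceil\sqrt{m+1}\,\rceil$.

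It is worth noting, though, that your mimic coloring already suffices for the bare inequality $\diste(\mu(K_{1,m}))\le m$ even without $\phi(w)=w$: if $\phi(w)=u_0$ then each spoke $wu_i$ (color~$1$) must map to some edge $u_0v_j$ (color~$j$), forcing $j=1$ for every $i$, which is impossible for $m\ge 2$. So a short direct check on the single problematic family would close your argument; the paper instead invokes the structural lemma and treats stars with a sharper, bespoke coloring.
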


Notably, their conjecture does not require the graph to avoid vertices that have identical neighborhoods and tightens the inequality by removing the plus one. Boutin, Cockburn, Keough, Loeb, Perry, and Rombach proved and exceeded the distinguishing vertex coloring part of Conjecture~\ref{conj:AS} in 2022 \cite{BCKLPR22}. This paper completes the proof of the conjecture by showing the edge distinguishing inequality.

Since our proofs will rely on showing automorphisms are trivial based on facts about the graph and its coloring, we introduce definitions and notations about graphs.
Given a vertex $v$, the \emph{vertex neighborhood of $v$}, denoted $N_G(v)$, is the set of all vertices adjacent to $v$ in $G$. The \emph{degree of a vertex $v$} is $|N_G(v)|$ and is denoted $\deg_G(v)$. When the graph $G$ is clear, we omit the subscript and simply write $N(v)$ or $\deg(v)$.
The \emph{distance between two vertices $u$ and $v$ in $G$}, denoted $d(x,y)$ is the length of a shortest path between vertices $x$ and $y$.

By definition, automorphisms preserve adjacencies and non-adjacencies between vertices. As a result, automorphisms also preserve vertex degrees and distances between vertices.  



\section{Distinguishing Index of Mycielskian Graphs} \label{sec:mycielskian}

In a graph $G$, $u$ and $v$ are twins if $N(u)=N(v)$. For any graph $G$ that contains twins $u$ and $v$, there is an automorphism $\phi$ of $G$ such that $\phi(u)=v$ and $\phi(v)=u$ while $\phi(x)=x$ for all $x\in V(G)\setminus \{u,v\}$. In a vertex distinguishing coloring of $G$, this implies twins must receive different colors.
In an edge distinguishing coloring of $G$ with twins, we must prevent the automorphism that switches twins using a coloring of the edges. 
In \cite{AS2020} Alikhani and Soltani consider graphs without twins to avoid this complication. 
Lemma~\ref{lem:twincolors} establishes a fact about distinguishing edge colorings of graphs with twins that will aid us in proving Conjecture~\ref{conj:AS}.

\begin{lem}\label{lem:twincolors}
   Let $G$ be a graph with vertices $x_1$ and $x_2$ such that $N(x_1) = N(x_2)$. If $c$ is a distinguishing edge coloring of $G$, then there exists $v \in N(x_1)=N(x_2)$ such that $c(vx_1)\neq c(vx_2)$.
\end{lem}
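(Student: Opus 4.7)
The plan is to prove the contrapositive via the natural twin-swap automorphism. Specifically, suppose for contradiction that $c$ is a distinguishing edge coloring but that $c(vx_1) = c(vx_2)$ for every $v \in N(x_1) = N(x_2)$. I would then define $\phi : V(G) \to V(G)$ by $\phi(x_1) = x_2$, $\phi(x_2) = x_1$, and $\phi(y) = y$ for all other $y \in V(G)$, and show that $\phi$ is a nontrivial automorphism of $G$ that preserves the edge coloring $c$, contradicting the hypothesis.

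The first step is to verify that $\phi$ is a graph automorphism. Since $N(x_1) = N(x_2)$, a vertex $y \notin \{x_1, x_2\}$ is adjacent to $x_1$ if and only if it is adjacent to $x_2$, so adjacencies of the form $yx_1 \leftrightarrow yx_2$ are preserved. Adjacencies not involving $x_1$ or $x_2$ are preserved trivially since $\phi$ fixes all other vertices. The only remaining case is the potential edge $x_1 x_2$; this cannot exist because $x_1 \notin N(x_1)$, so $x_1 \notin N(x_2)$ either. Thus $\phi \in \operatorname{Aut}(G)$, and it is clearly nontrivial.

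The second step is to check that $\phi$ preserves $c$. Every edge of $G$ either (i) is incident to neither $x_1$ nor $x_2$, in which case $\phi$ fixes it setwise and trivially preserves its color, or (ii) has the form $vx_1$ or $vx_2$ for some $v \in N(x_1) = N(x_2)$, in which case $\phi$ swaps $vx_1$ with $vx_2$, and by our assumption these two edges share the same color. Hence $\phi$ is a color-preserving nontrivial automorphism, contradicting the assumption that $c$ is distinguishing.

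There is no substantive obstacle here; the only subtlety to articulate carefully is why $x_1 x_2$ cannot be an edge (which follows from the standard convention that $v \notin N(v)$), since otherwise one would have to worry about whether $\phi$ maps the edge $x_1 x_2$ somewhere consistent. Once that point is handled, the proof is a direct verification.
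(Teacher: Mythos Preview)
Your proposal is correct and follows essentially the same approach as the paper: prove the contrapositive by exhibiting the twin-swap map $\phi$ as a nontrivial color-preserving automorphism. Your version is actually slightly more careful than the paper's, since you explicitly note why $x_1x_2$ cannot be an edge.
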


\begin{proof}
     Given a graph $G$, label the vertices of $V(G)$ such that for $x_1,x_2\in V(G)$, $N(x_1) = N(x_2)$. We will prove the contrapositive: that is, if for all $v\in N(x_1)=N(x_2)$ we have $c(vx_1)=c(vx_2)$, then $c$ is not a distinguishing edge coloring of $G$.
     We assume that for all $v\in N(x_1)=N(x_2)$ we have $c(vx_1)=c(vx_2)$. Let $\phi$ be the automorphism that swaps twins $x_1$ and $x_2$, so that $\phi(x_1)=x_2$, $\phi(x_2)=x_1$, and $\phi(x)=x$ for all other vertices. For all $v\in N(x_1) = N(x_2)$ we have $c(vx_1)=c(vx_2)$, and so $c(\phi(vx_1))=c(vx_2)=c(vx_1)$ and $c(\phi(vx_2)=c(vx_1)=c(vx_2)$. Since all other vertices are fixed, $\phi$ is a nontrivial color-preserving automorphism and $c$ is not a distinguishing edge coloring.
\end{proof}

The strategy for proving Conjecture~\ref{conj:AS} is to separate out cases where the root $w$ is fixed in any automorphism. In Figure~\ref{fig:muK13} we can see that there is an automorphism of $\mu(K_{1,3})$ such that $\phi(w) = u_0$, where $u_0$ is the shadow vertex of the unique vertex of maximum degree in $K_{1,3}$.  In \cite{BCKLPR22}, the authors prove that for any graph $G$ and for any automorphism $\phi$ of $\mu(G$) the root $w$ is fixed unless $G = K_{1,m}$ for some $m\ge 0$. Since we rely on this result for our proofs, we state it precisely below.

\begin{lem}\label{lem:autos} \cite{BCKLPR22}
    If there is an automorphism $\phi$ of $\mu(G)$ that takes the root $w$ to any other vertex, then $G = K_{1,m}$ for some $m\ge 0$. Additionally, if $G = K_{1,m}$ for $m\ge 2$ then $\phi(w)\in \{w,u_0\}$ where $u_0$ is the shadow vertex of the unique vertex of maximum degree in $G$.
\end{lem}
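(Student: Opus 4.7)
I would use two invariants of the root $w$ that any automorphic image must share: $\deg_{\mu(G)}(w) = n$, and the fact that the neighborhood $N(w) = \{u_1, \ldots, u_n\}$ induces an empty graph in $\mu(G)$ since the shadow vertices are pairwise non-adjacent. Consequently, any candidate $x = \phi(w)$ must satisfy $\deg_{\mu(G)}(x) = n$ and $N(x)$ independent. The proof then splits into two cases: $x = u_j$ for some shadow or $x = v_i$ for some original vertex.

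\textbf{Shadow case.} If $x = u_j$, then $\deg_G(v_j) + 1 = n$ forces $v_j$ to be adjacent to every other original vertex of $G$. Since $w$ has no edges into the originals, the independence of $N(u_j) = \{w\} \cup N_G(v_j)$ reduces to the independence of $N_G(v_j) = V(G) \setminus \{v_j\}$ in $G$; hence every edge of $G$ is incident to $v_j$, so $G = K_{1, n-1} = K_{1, m}$ with $v_j$ as the center. When $m \geq 2$ the center is the unique vertex of maximum degree, so $u_j = u_0$. To verify that both values $\phi(w) \in \{w, u_0\}$ actually occur for $G = K_{1, m}$ with $m \geq 2$, I would exhibit the explicit involution that swaps $w \leftrightarrow u_0$, swaps each leaf with its shadow, and fixes the center; checking that this preserves adjacencies in $\mu(K_{1,m})$ is routine.

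\textbf{Original case.} If $x = v_i$, then $\deg_G(v_i) = n/2$ and the independence of $N(v_i) = N_G(v_i) \cup \{u_k : v_k \in N_G(v_i)\}$ forces $N_G(v_i)$ to be independent in $G$. I plan to derive a contradiction by comparing two induced subgraphs that $\phi$ must identify: the closed non-neighborhood of $w$, which equals $V \cup \{w\}$ and induces $G$ together with one isolated vertex; and the closed non-neighborhood of $v_i$, namely $\{v_i, w\} \cup V' \cup U'$ with $V' = V \setminus N_G[v_i]$ and $U' = U \setminus \{u_k : v_k \in N_G(v_i)\}$. In the latter, the vertex $w$ has positive degree $|U'| = n/2$, the shadow $u_i \in U'$ has degree exactly $1$, every other $u_a \in U'$ has degree $1 + |N_G(v_a) \cap V'|$, and each $v_a \in V'$ has the even degree $2|N_G(v_a) \cap V'|$. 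Equating the degree multisets and edge counts of the two induced subgraphs, together with the identity $\deg_G(v_{\pi(a)}) = 2\deg_G(v_a) - 1$ that arises from the bijection $\phi^{-1}|_{N(v_i)} \to N(w) = U$ for $v_a \in N_G(v_i)$ (using that $N_G(v_i)$ is independent, so $N_G(v_a) \cap N_G(v_i) = \emptyset$), yields a contradiction. The main obstacle is precisely this last step: extracting a clean numerical contradiction from the degree-multiset, edge-count, and degree-doubling constraints requires careful bookkeeping, but once done it rules out the original-vertex case entirely and completes the lemma.
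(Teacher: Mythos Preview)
The paper does not give its own proof of this lemma: it is quoted verbatim from \cite{BCKLPR22} and used as a black box, so there is no in-paper argument to compare against.

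On the merits of your proposal: the shadow case is clean and correct. The computation $\deg_{\mu(G)}(u_j)=\deg_G(v_j)+1=n$ forces $v_j$ to be universal, and independence of $N_{\mu(G)}(u_j)=\{w\}\cup\bigl(V(G)\setminus\{v_j\}\bigr)$ in $\mu(G)$ forces $V(G)\setminus\{v_j\}$ to be independent in $G$, hence $G=K_{1,n-1}$ with center $v_j$. The exhibited involution for $K_{1,m}$ is the right witness.

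The original-vertex case, however, is only a sketch, and you explicitly flag the gap yourself. The set-up is sound: $\deg_G(v_i)=n/2$, $N_G(v_i)$ independent, and the two closed non-neighbourhoods must be isomorphic. But the promised contradiction is never extracted. The ``degree-doubling'' relation $\deg_G(v_{\pi(a)})=2\deg_G(v_a)-1$ is correctly derived from $\phi^{-1}$ restricted to the original vertices in $N(v_i)$, yet you do not explain how this, together with the edge-count and degree-multiset equalities, forces an impossibility. For instance, equating edge counts of the two induced subgraphs yields only $2\,|E(G[V'])|=|E(N_G(v_i),V')|$, which is a constraint, not a contradiction; and the degree-doubling map need not iterate within $N_G(v_i)$, so no immediate growth argument applies. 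As written, the original-vertex case is an outline whose crux remains unproved. Since the cited source \cite{BCKLPR22} handles this with a structural analysis of $\mu(G)$, you would either need to consult that argument or supply the missing combinatorial step in full.
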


Note that $K_{1,0}$ and $K_{1,1} = K_2$ do not have any edge-distinguishing colorings. Thus, for the edge distinguishing problem, we only consider $K_{1,m}$ for $m\ge 2$. We call the graphs $K_{1,m}$ for $m\ge 2$ star graphs and consider them in Section~\ref{subsec:mycielskistars}. We consider all other graphs in Section~\ref{subsec:mycielskiother}.

\subsection{Star Graphs}\label{subsec:mycielskistars} 
In this section we will show that $\diste(\mu(K_{1,m})) \le \diste(K_{1,m})$ proving Conjecture~\ref{conj:AS} for star graphs. In fact, Theorem~\ref{thm:smallmuK1m} shows that $\diste(K_{1,m})$ and $\diste(\mu(K_{1,m}))$ are only equal when $m=2$ and can be arbitrarily far apart.

\begin{figure}
\begin{center}
\begin{tikzpicture}[style=very thick, scale=1.4]
    \draw[blue] (1.5,2)--(-1.5,2);
    
    \draw[blue] (1.5,2)--(.5,1);
    \draw[red](0,0)--(.5,1);
    \draw[blue] (1.5,2)--(1.5,1)--(0,0);
    \draw[red] (1.5,2)--(2.5,1)--(0,0);

    \filldraw (.5,1) circle (2pt) node [anchor= west] {$u_1$};
    \filldraw (1.5,1) circle (2pt) node [anchor= west] {$u_2$};
    \filldraw (2.5,1) circle (2pt) node [anchor= west] {$u_3$};
    
    \draw[blue] (-.5,1)--(0,0);
    \draw[red](-1.5,2)--(-.5,1);
    \draw[blue] (-1.5,2)--(-1.5,1)--(0,0);
    \draw[red] (-1.5,2)--(-2.5,1)--(0,0);
    \filldraw (-.5,1) circle (2pt) node [anchor= east] {$v_1$};
    \filldraw (-1.5,1) circle (2pt) node [anchor= east] {$v_2$};
    \filldraw (-2.5,1) circle (2pt) node [anchor= east] {$v_3$};

    \filldraw (0,0) circle (2pt) node [anchor=north] {$v_0$};
    \filldraw (1.5,2) circle (2pt) node [anchor=west] {$w$};
    \filldraw (-1.5,2) circle (2pt) node [anchor=east] {$u_0$};
\end{tikzpicture}\\
\end{center}
    \caption{A distinguishing edge coloring on $\mu(K_{1,3})$ as described in the proof of Theorem~\ref{thm:smallmuK1m}.}
    \label{fig:muK13colored}
\end{figure}
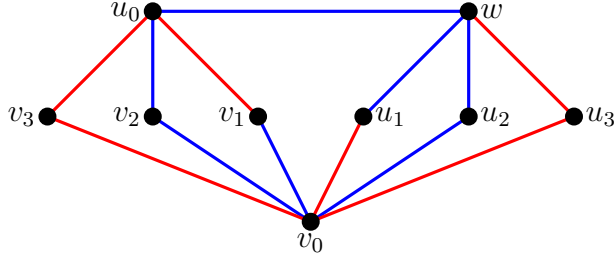


\begin{thm}\label{thm:smallmuK1m}
    For $m\geq 2$ 
    \[\diste\left(\mu(K_{1,m}) \right)=r\] where $r$ is the minimum natural number such that $r^2\ge m+1$. In particular,
    \[ \diste\left(\mu(K_{1,m})\right) \le m = \diste(K_{1,m}).\]
\end{thm}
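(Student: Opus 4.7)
The plan is to analyze the full automorphism group of $\mu(K_{1,m})$ and translate the distinguishing condition into a combinatorial constraint on two lists of color pairs. Let $v_0$ be the center of $K_{1,m}$ and $v_1,\ldots,v_m$ its leaves, with shadows $u_0,u_1,\ldots,u_m$ and root $w$. Then $v_0$ is adjacent to every $v_i$ and every $u_i$ with $i\ge 1$, each $v_i$ ($i\ge 1$) has neighborhood $\{v_0,u_0\}$, and each $u_i$ ($i\ge 1$) has neighborhood $\{v_0,w\}$. Since $v_0$ has unique maximum degree $2m$, any automorphism fixes it; combined with Lemma~\ref{lem:autos}, the automorphism group is generated by arbitrary permutations of $\{v_1,\ldots,v_m\}$, arbitrary permutations of $\{u_1,\ldots,u_m\}$, and the swap $\phi(w)=u_0,\,\phi(u_0)=w$, which is forced to interchange the $v$- and $u$-leaves.

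To each leaf I attach a color pair: $a_i=(c(v_0v_i),c(u_0v_i))\in[r]^2$ at $v_i$ and $b_i=(c(v_0u_i),c(wu_i))\in[r]^2$ at $u_i$. The two twin-permutation families are killed exactly when the $a_i$'s are pairwise distinct and the $b_i$'s are pairwise distinct. Tracing the swap through each of the four edge types incident to a leaf shows that an automorphism with $\phi(w)=u_0$ exists if and only if there is a bijection $\pi$ with $a_i=b_{\pi(i)}$ for every $i$; assuming the $a$'s and $b$'s are already distinct, this is equivalent to $\{a_1,\ldots,a_m\}=\{b_1,\ldots,b_m\}$ as subsets of $[r]^2$. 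So a coloring is distinguishing if and only if both lists are distinct and the two sets of pairs are unequal; the color of $wu_0$ plays no role.

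Both bounds then follow from this characterization. For the lower bound, if $r^2\le m$ then either the $a_i$'s cannot all be distinct (when $r^2<m$) or both $\{a_i\}$ and $\{b_j\}$ are forced to equal $[r]^2$ (when $r^2=m$); either way some automorphism survives, giving $\diste(\mu(K_{1,m}))\ge r$ with $r$ minimal such that $r^2\ge m+1$. For the upper bound, when $r^2\ge m+1$ I choose any $m$-subset $A\subseteq[r]^2$ and any $m$-subset $B\subseteq[r]^2$ with $A\ne B$ (possible since there are at least $m+1$ pairs, so one can include a fixed pair in $A$ but exclude it from $B$), enumerate $A$ and $B$ as the $a_i$'s and $b_i$'s, and color $wu_0$ arbitrarily. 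Finally, $r\le m$ follows from $m^2\ge m+1$ for $m\ge 2$, and $\diste(K_{1,m})=m$ is immediate since every edge of the star is incident to its center.

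The main obstacle is pinning down the extra $+1$ in the inequality $r^2\ge m+1$: the case $r^2=m$ is subtle because the coloring is forced to exhaust all of $[r]^2$ on both the $a$-side and the $b$-side, which is precisely the condition that enables the $w\leftrightarrow u_0$ swap. Carefully tracking that swap's action on all four types of leaf edges, and verifying that any bijection $\pi$ with $a_i=b_{\pi(i)}$ really does extend to an automorphism, is the key computation that makes the bound tight rather than merely $r^2\ge m$.
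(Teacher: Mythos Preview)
Your proposal is correct and follows essentially the same approach as the paper: both encode each degree-$2$ vertex by the ordered pair of colors on its two incident edges (your $a_i,b_i$ are the paper's ordered pairs in $L_1,L_2$), use the twin lemma to force distinctness within each list, and use Lemma~\ref{lem:autos} plus the $w\leftrightarrow u_0$ swap to see that $r^2=m$ forces $\{a_i\}=\{b_j\}=[r]^2$ and hence fails. The only organizational difference is that you first state a clean if-and-only-if characterization of all distinguishing colorings and then read off both bounds, whereas the paper exhibits one specific coloring (assigning $p_i$ to the $i$th pair in $L_1$ and $p_{i+1}$ to the $i$th pair in $L_2$) and verifies it directly; your framing is slightly more general but the underlying idea is identical.
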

\begin{proof}
    Let $V(K_{1,m})=\{v_0, v_1, v_2,\dots, v_m\}$ be the vertex set of $K_{1,m}$ such that $\deg(v_0)=m$. Let $u_0, u_1, u_2,\dots, u_m$ be the shadow vertices in $\mu(K_{1,m})$, and let $w$ be the root. Additionally, let $I=\{1,2,\dots, m\}$. Then we have the vertex set \[V(\mu(K_{1,m}))=\{v_0, v_1,\dots, v_m, u_0, u_1, \dots, u_m, w\}\] and the edge set 
    \[E(\mu(K_{1,m}))=\{v_0v_i,v_0u_i,u_0v_i,wu_i,wu_0 :  i\in I\}.\]
    We consider two important subsets of the edges - let $L_1$ and $L_2$ be sets of ordered pairs of edges such that $L_1=\{(u_0v_i,v_iv_0)  :  i\in I\}$ and $L_2=\{(wu_i,u_iv_0)  :  i\in I\}$. Note that each edge in $E(\mu(K_{1,m}))$ is in an ordered pair in $L_1$ or $L_2$ or is the edge $u_0w$. In Figure~\ref{fig:muK13colored}, $L_1$ represents the "elbows" drawn on the left, and $L_2$ represents the "elbows" drawn on the right.
    
    Choose $r$ least such that $r^2\geq m+1$. We will color $\mu(K_{1,m})$ with $r$ colors. Given $r$ colors, there are $r^2$ ordered pairs of colors. Let $P=\{p_1,p_2,\dots p_{r^2}\}$ be the set of all ordered pairs of $r$ colors. 
    First, we color the edges in $L_1$. For $i\in I$, we assign the color pair $p_i$ to the ordered pair $(u_0v_i, v_iv_0)$ so that $u_0v_i$ gets the first color in the ordered pair $p_i$ and $v_iv_0$ gets the second color in the ordered pair $p_i$. 
    Similarly, for the $i^{th}$ ordered pair in $L_2$, $(wu_i, u_iv_0)$, we assign the color pair $p_{i+1}$. Since $r^2\geq m+1$, there are enough color pairs to do so. 
    Finally, color the edge $u_0w$ any of the $r$ colors.

    We will now show this coloring is distinguishing. Suppose $\phi$ is a color-preserving automorphism on $\mu(K_{1,m})$. 
    By Lemma~\ref{lem:autos}, $\phi(w) \in \{w,u_0\}$. 
    By construction, every pair $p_j$ for $1\le j \le r^2$ is distinct so $\phi(v_i)\ne v_\alpha$ for any $\alpha\ne i$ and $\phi(u_i)\ne u_\alpha$ for any $\alpha\ne i$. Because the color pair $p_1$ colors an edge pair in $L_1$ and does not color any edge pair in $L_2$, there is no $\beta\in I$ such that $\phi(v_1)=u_\beta$. So $\phi(v_i)=v_i$ and $\phi(u_i)=u_i$ for every $i\in I$. It follows that $\phi(w)=w$ and $\phi(u_0)=u_0$. So we have a distinguishing edge coloring for $\mu(K_{1,m})$ with $r$ colors and $\diste(\mu(K_{1,m}))\le r$.

    To show that $\diste(\mu(K_{1,m})) = r$, suppose that we have a distinguishing coloring with $s$ colors such that $s<r$. Let  $P'=\{p_1,p_2,\dots p_{s^2}\}$ be the set of all ordered pairs of $s$ colors. Since $s<r$, and $r$ is least such that $r^2\ge m+1$, $s^2<m+1$ and  $P'$ contains at most $m$ color pairs. Note that for every $i\in I$, $N(v_i) = \{u_0, v_0\}$ and hence for all $i,j\in I$, $v_i$ and $v_j$ are twins. By Lemma~\ref{lem:twincolors}, every edge pair 
    in $L_1$ must be colored with a different color pair. 
    Since there are $m$ edge pairs in $L_1$ we must have $s^2\ge m$. Since $s^2\le m$ by assumption, suppose $s^2=m$. In this case all color pairs must be used to color the edge pairs of $L_1$ and all color pairs must be used to color the edge pairs of $L_2$. Because $L_1$ and $L_2$  use all of the color pairs of $P'$, we can construct a non-trivial automorphism $\phi$ that is color preserving where $\phi(w) = u_0$. So we have found that we cannot make a distinguishing edge coloring with fewer than $r$ colors and $\diste(\mu(K_{1,m})) = r$.
    


    Note that $\dist'(K_{1,m})=m$ for if any two edges were the same color, their degree $1$ vertices could be switched in a color preserving automorphism. For $m\ge 2$, we know $m^2\ge m+1$ and hence $r=m$ satisfies $r^2\ge m+1$. Since $r$ is least such that $r^2\ge m+1$, $r\le m$ and we conclude 
     \[ \diste\left(\mu(K_{1,m})\right) = r \le m = \diste(K_{1,m}).\]\end{proof}

\subsection{Not Star Graphs}\label{subsec:mycielskiother}

In this section we complete the proof of Conjecture~\ref{conj:AS} by proving $\diste(\mu(G))\le \diste(G)$ for all non-star graphs for which $\diste(G)$ is defined.

\begin{thm} \label{thm:notstars}
    Let $G$ be a graph with no connected $K_2$ component and at most one isolated vertex $G\neq K_{1,m}$ for any $m$. Then $\diste(\mu(G))\le \diste(G)$.
\end{thm}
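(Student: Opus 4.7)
The plan is to take a distinguishing edge coloring $c$ of $G$ using $k=\diste(G)$ colors and lift it to a distinguishing edge coloring $c_\mu$ of $\mu(G)$ with the same $k$ colors. Specifically, for each edge $v_iv_j$ of $G$ I will color the three corresponding edges in $\mu(G)$ -- namely $v_iv_j$, $v_iu_j$, and $u_iv_j$ -- all with the color $c(v_iv_j)$. The $n$ spoke edges $u_iw$ can receive any fixed color, say color $1$. No new colors are introduced, so it suffices to show $c_\mu$ is distinguishing.

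To verify this, let $\phi$ be a color-preserving automorphism of $\mu(G)$. Since $G\neq K_{1,m}$, Lemma~\ref{lem:autos} forces $\phi(w)=w$. Thus $\phi$ permutes the neighbors of $w$, which are exactly the shadow vertices $U=\{u_1,\dots,u_n\}$, and correspondingly $\phi$ permutes the remaining original vertices $V=\{v_1,\dots,v_n\}$. Because two vertices in $V$ are adjacent in $\mu(G)$ if and only if they are adjacent in $G$, the restriction $\phi|_V$ is an automorphism of $G$, and because $c_\mu$ agrees with $c$ on these original edges, $\phi|_V$ is color-preserving with respect to $c$. Since $c$ is distinguishing on $G$, this forces $\phi|_V=\mathrm{id}_V$. (The ``at most one isolated vertex'' hypothesis on $G$ is needed here only to ensure $\diste(G)$ is defined.)

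It then remains to show $\phi$ fixes each shadow vertex. Suppose $\phi(u_i)=u_j$. Comparing neighborhoods and using the fact that $\phi$ fixes $w$ and every vertex of $V$ yields $N_G(v_i)=N_G(v_j)$, so $v_i$ and $v_j$ are twins in $G$. The main obstacle is precisely the possible existence of twins, and Lemma~\ref{lem:twincolors} is tailored to resolve it: if $v_i\neq v_j$, there is some $v\in N_G(v_i)=N_G(v_j)$ with $c(vv_i)\neq c(vv_j)$. By construction $c_\mu(u_iv)=c(v_iv)$ and $c_\mu(u_jv)=c(v_jv)$, so $\phi$ does not preserve the color of the edge $u_iv$, a contradiction. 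Hence $\phi(u_i)=u_i$ for every $i$.

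Therefore $\phi$ is trivial, $c_\mu$ distinguishes $\mu(G)$, and $\diste(\mu(G))\le k=\diste(G)$. The argument has three moving parts: Lemma~\ref{lem:autos} supplies $\phi(w)=w$ from the non-star hypothesis; copying $c$ onto the original and cross edges transfers the distinguishing property from $G$ to $V\subset V(\mu(G))$; and Lemma~\ref{lem:twincolors} handles the one genuinely delicate case, namely swapping shadows $u_i,u_j$ of twin originals $v_i,v_j$.
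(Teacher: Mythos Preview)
Your proof is correct and follows essentially the same approach as the paper: the same lifted coloring $c_\mu=\cbar$, the same invocation of Lemma~\ref{lem:autos} to fix $w$, the same restriction argument to fix the original vertices, and the same use of Lemma~\ref{lem:twincolors} to rule out swapping shadows of twins. The only cosmetic difference is that you separate the levels via $N(w)=U$ rather than via distances $d(u_i,w)=1$, $d(v_i,w)=2$; this is arguably cleaner since it handles a possible isolated vertex in $G$ without special comment.
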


\begin{proof}
Define the vertices of $\mu(G)$ to be $v_1,\dots, v_n$, their shadows to be $u_1,\dots, u_n$ and the root to be $w$.
Let $c$ be a distinguishing coloring of $E(G)$ with colors $1,\dots, q$. We define a coloring $\cbar$ of $E(\mu(G))$ that ``mimics" the coloring on $E(G)$. If  $xy \in\{ v_jv_k, v_ju_k, v_ku_j\}$ for any $1\le k,j\le n$ then $\cbar(xy) = c(v_jv_k)$. Lastly, $\cbar(wu_i) = 1$ for all $1\le i\le n$.

Let $\phi$ be a color-preserving automorphism. Since $G \neq K_{1,m}$ we know by Lemma~\ref{lem:autos} that for the root $w$ of $\mu(G)$, $\phi(w)=w$. For all $1\le i\le n$, $d(u_i, w) = 1$ and $d(v_i, w) = 2$. Therefore, for all $1\le i\le n$, $\phi(u_i) = u_j$ for some $1\le j\le n$ and $\phi(v_i) = v_j$ for some $1\le j\le n$. That is, the levels of $\mu(G)$ are fixed set-wise. Moreover, $\cbar$ fixes  $v_i\in V(\mu(G))$ for $1\le i \le n$ because $\cbar$ restricts to a distinguishing edge coloring of $G$. Let $u_k$ be the shadow of a non-twin vertex in $\mu(G)$. Then $u_k$ has a unique and fixed neighborhood, and $\phi(u_k)=u_k$ for all such $u_k$.

We will show that twins $u_i, u_j \in V(\mu(G))$ are fixed using a proof by contradiction. So, we assume that $u_i, u_j$ are not fixed by $\phi$, so $\phi(u_i) = u_j$ and $\phi(u_j) = u_i$. Since $u_i, u_j$ are twins, there are twins $v_i$ and $v_j$ in $G$ where $N(v_i) =N(v_j)$, so by Lemma \ref{lem:twincolors}, there exists some $v_0 \in N(v_i) = N(v_j)$ such that $c(v_kv_i) \neq c(v_0v_j)$. Since $c(v_kv_i) = \cbar(v_ku_i)$ and $c(v_kv_j) = \cbar(v_ku_j)$ we know $\cbar(v_ku_i)\ne  \cbar(v_ku_j)$. Then $\phi$ cannot be a color-preserving automorphism on $\mu(G)$ which is a contradiction. Thus $u_i$ and $u_j$ are fixed as desired.
\end{proof}

Theorems~\ref{thm:smallmuK1m} and \ref{thm:notstars} prove Conjecture~\ref{conj:AS}. In fact, we exceed the conjecture as the proofs do not require $G$ to be connected and there are no exceptions besides those for which $\diste(G)$ is undefined.

\begin{cor}\label{cor:muproof}
 For all graphs $G$ with $|V(G)|\ge 3$, no connected $K_2$ component, and at most one isolated vertex
    \[\diste(\mu(G))\le \diste(G).\]
\end{cor}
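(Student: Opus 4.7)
The plan is to derive the corollary by a direct case split on whether $G$ is a star graph $K_{1,m}$, invoking the two theorems just established. No new technical work is required beyond checking that the case analysis is exhaustive under the hypotheses.

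First I would handle the star case. Among the star graphs $K_{1,m}$, the graph $K_{1,0}$ is a single isolated vertex, which is excluded by the hypothesis $|V(G)|\ge 3$, and $K_{1,1}=K_2$ is excluded by the no-$K_2$-component hypothesis. Hence every star graph satisfying the hypotheses of the corollary has $m\ge 2$, and for exactly these graphs Theorem~\ref{thm:smallmuK1m} yields $\diste(\mu(K_{1,m}))\le \diste(K_{1,m})$.

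Next I would address the remaining case, where $G$ is not $K_{1,m}$ for any $m$. Because the hypotheses of Theorem~\ref{thm:notstars} match those of the corollary in this situation (no connected $K_2$ component, at most one isolated vertex, and $G\neq K_{1,m}$), Theorem~\ref{thm:notstars} immediately gives $\diste(\mu(G))\le \diste(G)$. Since every admissible $G$ either is or is not a star, the two cases exhaust all possibilities.

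The only step that requires any attention is the bookkeeping described in the first case: confirming that the star graphs excluded from Theorem~\ref{thm:smallmuK1m} (namely $K_{1,0}$ and $K_{1,1}$) are precisely the star graphs already forbidden by the hypotheses on $|V(G)|$ and the absence of $K_2$ components. There is no genuine obstacle to overcome.
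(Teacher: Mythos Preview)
Your proposal is correct and is essentially identical to the paper's own treatment: the paper does not give a separate proof of the corollary but simply remarks that Theorems~\ref{thm:smallmuK1m} and~\ref{thm:notstars} together yield it, which is exactly the star/non-star case split you spell out. Your added bookkeeping that $K_{1,0}$ and $K_{1,1}$ are excluded by the hypotheses is the only extra detail, and it is straightforward.
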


\section{Generalized Mycielskian Graphs}

The generalized Mycielskian, sometimes called the cone over a graph was introduced by Steibitz \cite{S85}. Let $G$ be a graph with vertices $v_1, \dots , v_n$. Then the \emph{generalized Mycielskian} of $G$ with $t$ layers, denoted $\mu_t(G)$, has vertex set 
\[ V(\mu_t(G)) = \{u_1^0, \dots , u_n^0, u_1^1, \dots , u_n^1, \dots , u_1^t, \dots , u_n^t, w\}.\] 
The vertices $u_1^0, \dots , u_n^0$ represent the "original" vertices $v_1, \dots , v_n$ which can be thought of as the zeroth layer. 
For each edge $v_iv_k$ in $G$ with $1\le i,k\le n$, $\mu_t(G)$ has edges $u_i^0u_k^0$, $u_i^ju_k^{j+1}$, and $u_k^ju_i^{j+1}$ for $0\leq j < t$. Lastly, $\mu_t(G)$  has the edges $u_i^tw$ for $1\le i \le n$. In Figure~\ref{fig:genmyc}, $\mu(K_{1,2})$, $\mu_2(K_{1,2})$ and $\mu_3(K_{1,2})$ are shown.

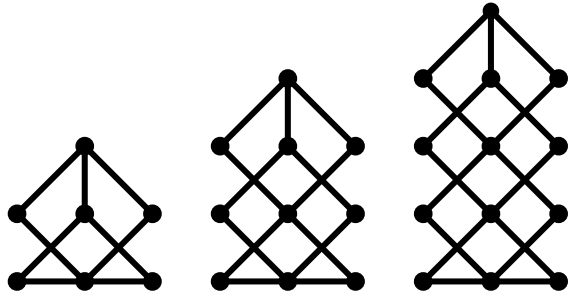
\begin{figure}
    \centering
    \begin{tikzpicture}[line width=0.8mm,scale=.3]

\draw (0,0) -- (3,0);
\draw (3,0)--(6,0);


\draw (0,0) -- (3,3)--(0,6)--(3,9)--(3,12);
\draw  (3,0) -- (0,3)--(3,6)--(0,9)--(3,12)--(6,9);
\draw(6,0) -- (3,3)--(6,6)--(3,9);
\draw (3,0) -- (6,3)--(3,6)--(6,9);

\draw[fill=black!100, line width=1mm] (3,12) circle (.2);

\foreach \x in {0,1.5,3}
	{\draw[fill=black!100]  (2*\x,0) circle (.28);}

\foreach \x in {0,1.5,3}
	{\draw[fill=black!100]  (2*\x,3) circle (.28);}	
 
\foreach \x in {0,1.5,3}
	{\draw[fill=black!100]  (2*\x,6) circle (.28);}	
 
 \foreach \x in {0,1.5,3}
	{\draw[fill=black!100]  (2*\x,9) circle (.28);}	

 \begin{scope}[shift={(-9,0)}]
        \draw (0,0) -- (6,0);
        \draw[black](3,0)--(0,3)--(3,6)--(3,9);
        \draw[black] (0,0) -- (3,3)--(0,6)--(3,9);
        \draw(6,0) -- (3,3)--(6,6)--(3,9);
        \draw (3,0) -- (6,3)--(3,6);
        \draw[fill=black!100]  (3,9) circle (.28);
        \foreach \x in {0,1.5,3}
	   {\draw[fill=black!100]  (2*\x,6) circle (.28);}
        \foreach \x in {0,1.5,3}
	   {\draw[fill=black!100]  (2*\x,3) circle (.28);}
    \foreach \x in {0,1.5,3}
	   {\draw[fill=black!100]  (2*\x,0) circle (.28);}
    \begin{scope}[shift={(-9,0)}]
    \draw (0,0) -- (6,0);
        \draw[black](3,0)--(0,3)--(3,6);
        \draw[black] (0,0)--(3,3)--(3,6);
        \draw(6,0)--(3,3);
        \draw (3,0)--(6,3)--(3,6);
        \draw[fill=black!100]  (3,6) circle (.28);
        \foreach \x in {0,1.5,3}
	   {\draw[fill=black!100]  (2*\x,3) circle (.28);}
    \foreach \x in {0,1.5,3}
	   {\draw[fill=black!100]  (2*\x,0) circle (.28);}
    \end{scope}
 \end{scope}
 
\end{tikzpicture}
    \caption{From left to right, $\mu(K_{1,2})$, $\mu_2(K_{1,2})$ and $\mu_3(K_{1,2})$.}
    \label{fig:genmyc}
\end{figure}

In \cite{BCKLPR22} it is shown that a result analogous to Lemma~\ref{lem:autos} holds for generalized Mycielskian graphs. We will rely on this result, so we repeat it here, insisting $m\ge 2$.

\begin{lem}\label{lem:autost}\cite{BCKLPR22} Let $G$ be a graph, let $t\ge 1$, and let $\phi$ be an automorphism of $\mu_t(G)$. If $G\neq K_{1,m}$ for any $m\ge 2$ then, for the root vertex $w$, we have $\phi(w)=w$.  If $G = K_{1,m}$ for $m\ge 2$ then $\phi(w)\in \{ w, u_0^t\}$ where $u_0^t$ is the top-level shadow vertex of the vertex of degree $m$ in $K_{1,m}$.
\end{lem}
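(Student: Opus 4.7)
The plan is to characterize $w$ via combinatorial invariants of $\mu_t(G)$ so that every automorphism is forced to fix it, except in the exceptional star case. The central observation is the natural layering of $\mu_t(G)$: writing $L_j = \{u_i^j : 1 \le i \le n\}$, every edge lies either inside $L_0$ (which carries a copy of $G$), between consecutive layers $L_j$ and $L_{j+1}$, or between $L_t$ and $w$. Since each edge changes the layer index by at most $1$, we have $d(u_i^j, L_0) = j$ for every $v_i$ with a neighbor in $G$, and $d(w, L_0) = t+1$. Consequently, it suffices to show $\phi(L_0) = L_0$: this preserves the function $d(\cdot, L_0)$, forces $\phi$ to preserve each $L_j$ setwise, and then singles out $w$ as the unique vertex at distance $t+1$ from $L_0$.

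To establish $\phi(L_0) = L_0$, the idea is to exhibit a combinatorial invariant carried only by vertices of $L_0$. When $G$ contains a triangle, a short direct check using the layered edge structure shows that every triangle of $\mu_t(G)$ has at least two of its three vertices in $L_0$ (same-layer edges exist only in $L_0$, and layers at index $\ge 2$ together with $w$ lie in no triangle), so counting triangles through each vertex singles out $L_0$. When $G$ is triangle-free, $\mu_t(G)$ is triangle-free as well, and one turns instead to $4$-cycles: $4$-cycles of $\mu_t(G)$ avoiding $L_0$ must have a ``ladder'' shape across two consecutive higher layers, while $L_0$ vertices participate in additional $4$-cycles arising from paths of length $2$ in $G$ together with crossings into $L_1$. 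A careful local count then separates $L_0$ from the higher layers, after which the distance argument above forces $\phi(w) = w$.

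The case $G = K_{1,m}$ with $m \ge 2$ is the genuine exception: both $w$ and $u_0^t$ have independent neighborhoods of size $m+1$, and the heavy symmetry of the star permits a folding automorphism that swaps $w$ with $u_0^t$ and cascades down the layers, e.g.\ for $t=2$ the map $w \leftrightarrow u_0^2$, $u_0^0 \leftrightarrow u_0^1$, $u_i^1 \leftrightarrow u_i^2$ with the leaves $u_i^0$ fixed for $i \ge 1$, which a direct check verifies is a graph automorphism. A degree plus neighborhood-independence argument then shows that $u_0^t$ is the only other possible image of $w$. The main obstacle is the triangle-free case with small diameter (for instance short cycles), where neither triangles nor large degree gaps separate $L_0$ from the higher layers; a careful $4$-cycle or longer subgraph count, together with explicit treatment of a few sporadic families, is the part of the argument that requires the most attention.
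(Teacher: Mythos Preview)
The paper does not prove this lemma: it is quoted from \cite{BCKLPR22} and used as a black box, so there is no in-paper argument to compare your proposal against.

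On its own merits, your reduction to $\phi(L_0)=L_0$ via the distance function $d(\,\cdot\,,L_0)$ is sound (for $G$ without isolated vertices), and the $K_{1,m}$ case is essentially fine, since for $m\ge 2$ the vertices $w$ and $u_0^t$ are the only two of degree $m+1$ in $\mu_t(K_{1,m})$. The genuine gap is in establishing $\phi(L_0)=L_0$. Your triangle count distinguishes $u_i^0$ from $u_i^1$ only when $v_i$ lies in some triangle of $G$: if $v_i$ lies in no triangle, then both $u_i^0$ and $u_i^1$ lie in zero triangles of $\mu_t(G)$, so the count alone does not pin down all of $L_0$, and you have not supplied the propagation argument that would extend the conclusion from the triangle-rich part of $L_0$ to the rest. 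The triangle-free case is only gestured at (``a careful local count\ldots together with explicit treatment of a few sporadic families''), and this is exactly where the work lies: the $4$-cycle bookkeeping between $L_0\cup L_1$ and $L_1\cup L_2$ is sensitive to the degree sequence and to the presence of twins in $G$, and you have neither carried it out nor identified which small graphs need separate handling. As written this is a plausible plan, not a proof.
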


Motivated by Lemma~\ref{lem:autost}, we again consider star graphs separately.


\subsection{Star Graphs}

By Lemma \ref{lem:autost}, the star graphs $K_{1,m}$ for $m\geq 2$ have an automorphism where, for the root $w$, $\phi(w)\neq w$. In particular, for the root $w$, $\phi(w)\in \{w,u_0^t\}$. In other words, $w$ is fixed or is mapped to the shadow vertex of the unique vertex of maximum degree in $G$ on level $t$. The automorphisms that do not fix the root $w$ are seen in the horizontal symmetry shown in Figure~\ref{fig:kite}.

\begin{thm}\label{thm:mutstars}
    Let $m\geq 2$ and $t\in \mathbb{N}$.  Then 
    \[\diste\left(\mu_t(K_{1,m})\right)=r\]
    where $r$ is the minimum natural number such that $r^2\ge m+1$. In particular, for $m\ge 2$. \[\diste(\mu_t(K_{1,m})) \le \diste(K_{1,m}).\] 
\end{thm}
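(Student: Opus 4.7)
The plan is to mirror the two-part structure of Theorem~\ref{thm:smallmuK1m}, now with $t+1$ shadow layers instead of two. For each shadow vertex $u_i^j$ with $i\ge 1$ and $0\le j\le t$, there are exactly two incident edges: one to the ``lower'' center ($u_0^0$ if $j=0$, else $u_0^{j-1}$) and one to the ``higher'' center ($w$ if $j=t$, else $u_0^{j+1}$). I will call this ordered pair of edges the \emph{elbow} of $u_i^j$; each layer has $m$ elbows and the only edge not in any elbow is $u_0^t w$. Elbows play the role of the sets $L_1, L_2$ in the $t=1$ proof.

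For the upper bound, I would fix $r$ with $r^2\ge m+1$ and list the $r^2$ ordered color pairs as $p_1,\ldots,p_{r^2}$. Assign $p_i$ to the elbow of $u_i^t$ (first coordinate on the lower edge, second on the higher edge), $p_{i+1}$ to the elbow of $u_i^{t-1}$, and, for each remaining layer $0\le j\le t-2$, $p_i$ to the elbow of $u_i^j$; color the edge $u_0^t w$ arbitrarily. To verify this is distinguishing, let $\phi$ be a color-preserving automorphism. By Lemma~\ref{lem:autost}, either $\phi(w)=w$ or $\phi(w)=u_0^t$. If $\phi(w)=w$, then $\phi$ preserves distance to $w$ and hence preserves each layer setwise; within each layer $u_0^j$ is the unique vertex of degree greater than $2$, so $\phi(u_0^j)=u_0^j$, and the distinct elbow pairs then force $\phi$ to act trivially on shadows. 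If $\phi(w)=u_0^t$, then $\phi(u_0^t)=w$ (the only other vertex of degree $m+1$), and tracing neighborhoods shows $\phi$ sends the top layer shadows bijectively onto the layer-$(t-1)$ shadows while preserving the (lower, higher) orientation of each elbow. Color preservation then gives a permutation $\rho$ of $\{1,\ldots,m\}$ with $p_i=p_{\rho(i)+1}$ for all $i$; distinctness of the $p_k$ forces $\rho(i)=i-1$, which is impossible since $\rho(1)=0\notin\{1,\ldots,m\}$.

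For the lower bound, suppose a distinguishing coloring uses $s<r$ colors, so $s^2\le m$. Within each layer the shadow vertices $u_1^j,\ldots,u_m^j$ are pairwise twins (common neighborhood $\{u_0^{j-1},u_0^{j+1}\}$, with the obvious modification at $j=0$ or $j=t$), so applying Lemma~\ref{lem:twincolors} to each twin pair forces the $m$ elbow pairs in each layer to be distinct, giving $s^2\ge m$ and hence $s^2=m$. Thus every layer uses all $m=s^2$ ordered color pairs, so between any two layers the elbows can be matched bijectively by their color pair. I would then explicitly build a nontrivial color-preserving flip $\phi$: swap $w$ with $u_0^t$, swap the remaining centers in consecutive pairs $(u_0^{t-1},u_0^{t-2}),(u_0^{t-3},u_0^{t-4}),\ldots$ (with $u_0^0$ fixed if $t$ is odd), swap consecutive shadow layers $(t,t-1),(t-2,t-3),\ldots$ (with layer $0$ fixed if $t$ is even), and on each swapped shadow pair use the color-pair matching to determine the index permutation (with the identity on any self-paired layer, which is forced by distinctness there). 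This yields the needed contradiction, so $s\ge r$. Finally, $r\le m$ because $m^2\ge m+1$ for $m\ge 2$, giving the displayed inequality.

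The main obstacle will be verifying that the matching-based $\phi$ constructed in the lower bound is actually a graph automorphism: one must check that when the central swap chain and the shadow layer swaps are composed, the induced edge mappings between each $u_0^j$ and its adjacent shadows line up as genuine edges of $\mu_t(K_{1,m})$ and that color preservation is consistent (in particular, that the color-pair matching between two swapped shadow layers is compatible with the ``lower'' and ``higher'' endpoints of each elbow). This parallels the ``$L_1$ and $L_2$ use all color pairs'' argument at the end of Theorem~\ref{thm:smallmuK1m}, and the parity of $t$ only affects which single vertex is self-paired, not the heart of the argument.
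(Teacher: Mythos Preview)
Your plan tracks the paper's proof almost exactly: the elbow decomposition is the paper's sets $L_0,\dots,L_t$, your upper-bound coloring is a harmless variant (you shift layer $t-1$ by one instead of layer $t$), and your flip $\phi$ for the lower bound is the paper's $\psi$, with the same parity split. The upper-bound verification is fine as written.

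There is, however, a genuine slip in the lower bound. When $t$ is even and layer $0$ is self-paired, you assert that the identity permutation on $\{u_1^0,\dots,u_m^0\}$ works and is ``forced by distinctness.'' It is not. Tracing the center swaps you wrote down, one finds $\phi(u_0^0)=u_0^1$ and $\phi(u_0^1)=u_0^0$, so the edge $u_i^0u_0^0$ is sent to $\phi(u_i^0)u_0^1$ and $u_i^0u_0^1$ is sent to $\phi(u_i^0)u_0^0$: the (lower, higher) orientation of each elbow on layer $0$ is \emph{reversed}, not preserved. For $\phi$ to be color-preserving on layer $0$ you therefore need $\phi(u_i^0)=u_k^0$ where the elbow of $u_k^0$ carries the \emph{reversed} ordered pair of the elbow of $u_i^0$. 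This is not the identity whenever an asymmetric pair is present, which always happens since $s^2=m\ge 4$ in the case under discussion.

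The fix is short and is precisely what the paper does: because all $s^2$ ordered pairs occur on layer $0$, each pair and its reverse are both present, so the map ``send $u_i^0$ to the vertex whose elbow is the reverse of mine'' is a well-defined involution on layer $0$; compose your $\phi$ with this involution on layer $0$ and the result is color-preserving. Once you replace ``identity on the self-paired layer'' with this reversal involution, the rest of your outline goes through and matches the paper's argument.
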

\begin{proof}
     Let $V(\mu_t((K_{1,m}))=\{u^j_0, u^j_1, u^j_2,\dots, u^j_m: 0\le j\le t\}\cup \{w\}$ where $u^0_0$ is the center of $K_{1,m}$ and $w$ is the root.
    
    
To define the edge set, we start by defining $L_i$ for $0\le i\le t$ as sets of ordered pairs of edges. Let $L_0=\{(u^1_0u^0_i,u^0_iu^0_0)  :  i\in I\}$,
$L_{t}=\{(wu^{t}_0,u^{t}_iu^{t-1}_0) :  i\in I\}$, and 
\[L_\alpha=\{(u^{\alpha+1}_0u^{\alpha}_i,u^{\alpha}_iu^{\alpha-1}_0) :  1\le i \le n\}\] 
where $1\le \alpha < t$. 
The edges in ordered pairs in each $L_\alpha$ plus $u_0^tw$ make up $E(\mu_t(K_{1,m}))$. In Figure~\ref{fig:kite} the sets $L_i$ for $0\le i \le 5$ are shown for $\mu_5(K_{1,3})$.
     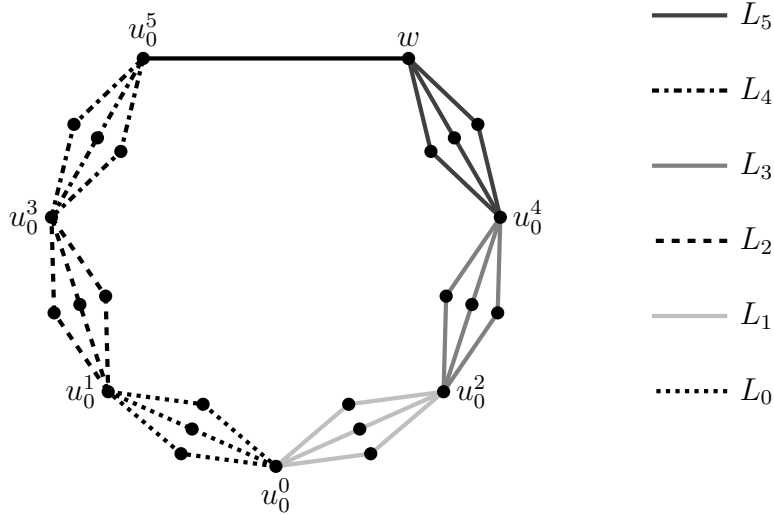
\begin{figure}
     \centering
     \begin{tikzpicture}[style=ultra thick,scale=1]
     \draw(126:3)--(54:3);
    \draw[white] (-6,3)--(-5,3) [];
    \fill[white] (-6,0) node [anchor=east] {$L_1$};

    \draw[darkgray] (6,3)--(5,3);
    \fill[] (6,3) node [anchor=west] {$L_5$};

    \draw[gray] (6,1)--(5,1);
     \fill[] (6,1) node [anchor=west] {$L_3$};
    
    \draw[lightgray] (6,-1)--(5,-1);
     \fill[] (6,-1) node [anchor=west] {$L_1$};
     
    \draw[dashed] (6,0)--(5,0);
     \fill[] (6,0) node [anchor=west] {$L_2$};
    
    \draw[dotted] (6,-2)--(5,-2);
     \fill[] (6,-2) node [anchor=west] {$L_0$};
    
    \draw[dashdotted] (6,2)--(5,2);
     \fill[] (6,2) node [anchor=west] {$L_4$};

    \draw[darkgray](270+24*6:3)--(270+24*5:3.1)--(270+24*4:3);
    \draw[darkgray](270+24*6:3)--(270+24*5:2.38)--(270+24*4:3);
    \draw[darkgray](270+24*6:3)--(270+24*4:3);

    \draw[gray](270+24*4:3)--(270+24*3:3.1)--(270+24*2:3);
    \draw[gray](270+24*4:3)--(270+24*3:2.38)--(270+24*2:3);
    \draw[gray](270+24*4:3)--(270+24*2:3);
    
    \draw[lightgray](270+24*2:3)--(270+24:3.1)--(270:3);
    \draw[lightgray](270+24*2:3)--(270+24*1:2.38)--(270:3);
    \draw[lightgray](270+24*2:3)--(270:3);

    \draw[dotted][black](270:3)--(270-24:3.1)--(270-24*2:3);
    \draw[dotted](270-24*2:3)--(270-24*1:2.38)--(270:3);
    \draw[dotted](270:3)--(270-24*2:3);
        
    \draw[dashed](270-24*2:3)--(270-24*3:3.1)--(270-24*4:3);
    \draw[dashed](270-24*4:3)--(270-24*3:2.38)--(270-24*2:3);
    \draw[dashed](270-24*2:3)--(270-24*4:3);

    \draw[dashdotted](270-24*4:3)--(270-24*5:3.1)--(270-24*6:3);
    \draw[dashdotted](270-24*6:3)--(270-24*5:2.38)--(270-24*4:3);
    \draw[dashdotted](270-24*4:3)--(270-24*6:3);

        
        
        
        \foreach \a in {-5,-3,-1,1,3,5}{
           \fill(270+24*\a:2.74) circle (2.5pt);
           \fill(270+24*\a:3.1) circle (2.5pt);
           \fill(270+24*\a:2.38) circle (2.5pt);
           }
        
    \fill(126:3) circle (2.5pt) node [anchor=south] {$u_0^{5}$};
    \fill(126+24*2:3) circle (2.5pt) node [anchor= east] {$u_0^3$};
    \fill(126+24*4:3) circle (2.5pt) node [anchor= east] {$u_0^1$};
    
    \fill(270:3) circle (2.5pt) node [anchor=north] {$u_0^0$};
    
    \fill(54-24*4:3) circle (2.5pt) node [anchor= west] {$u_0^2$};
    \fill(54-24*2:3) circle (2.5pt) node [anchor= west] {$u_0^4$};
    \fill(54:3) circle (2.5pt) node [anchor=south] {$w$};


    \end{tikzpicture}
     \caption{The sets $L_i$ for $0\le i\le 5$ in $\mu_5(K_{1,3})$.}
         \label{fig:kite}
     \end{figure}
    
    Choose $r$ least such that $r^2\geq m+1$. Given $r$ colors, there are $r^2$ ordered pairs of colors. Let $P=\{p_1,p_2,\dots p_{r^2}\}$ be the set of all ordered pairs of $r$ colors.
    For each $L_\alpha$ with $1\le \alpha <t$, we assign the color pair $p_i$ to the $i^{th}$ ordered pair  in $L_\alpha$, $(u^{\alpha+1}_0u^{\alpha}_i,u^{\alpha}_iu^{\alpha-1}_0)$ so that $u^{\alpha+1}_0u^{\alpha}_i$ gets the first color in the ordered pair $p_i$ and $u^{\alpha}_iu^{\alpha-1}_0$ gets the second color in the ordered pair $p_i$. Similarly, for the $i^{th}$ ordered pair in $L_0$, $(u_0^1u^0_i, u^{0}_iu^{0}_0)$, we assign the color pair $p_{i}$. 
    Finally, for the $i^{th}$ ordered pair in $L_t$, $(wu^t_i, u^{t}_iu^{t-1}_0)$, we assign the color pair $p_{i+1}$.   
    Since $r^2\geq m+1$, there are enough color pairs to do so. Finally, color the edge $u_0w$ any of the $r$ colors. Examples of this coloring on $\mu_5(K_{1,3})$ and $\mu_2(K_{1,5})$ are shown in Figure~\ref{fig:mu5K13colored}.
    
    We'll now prove this coloring is distinguishing. Suppose $\phi$ is a color-preserving automorphism on $\mu_{t}(K_{1,m})$ and suppose $\phi(w) = u_0^t$. Since $u_m^t$ has degree $2$ and is distance $1$ from $w$, $\phi(u_m^t)$ must have degree $2$ and be distance $1$ from $\phi(w) = u_0^t$. Thus $\phi(u_m^t) = u_i^{t-1}$ for some $1\le i\le t$. Similarly, by distance and degree, $\phi(u_0^{t-1}) = u_0^{t-2}$. Note that $(wu_m^{t}, u_m^tu_0^{t-1}) \in L_t$ has color pair $p_{m+1}$, and no pair of edges in $L_{t-1}=\{(u^{t}_0u^{t-1}_i,u^{t-1}_iu^{t-2}_0) :  1\le i \le n\}$ has this color pair. So $u_m^t$ can't map to $u_i^{t-1}$ because then the pair of edges $(wu_m^{t}, u_m^tu_0^{t-1})$ would map to the pair of edges $(u_0^tu_i^{t-1}, u_i^{t-1}u_0^{t-2})$ and this wouldn't be color-preserving. So $\phi(w) \neq u_0^t$. 
    Therefore, by Lemma~\ref{lem:autost}, $\phi(w)=w$. 
    
    Since $w$ is fixed, by distance and degree, the levels of $\mu_t(K_{1,m})$ are fixed setwise by $\phi$. The construction of our coloring means that on a given level $j$ with $0\le j\le t$, we used a distinct color ordered pair for the edges incident to any degree $2$ vertex. So, $\phi(u^j_i)\ne u^j_k$ for any $k\ne i$. Thus, $\phi(u^j_i)=u^j_i$ for $0\le j \le t$ and $1\le i\le m$. Since we also know $\phi(u^j_0)=u^j_0$ for $0\le j \le t$ and $\phi(w)=w$, we conclude that $\phi$ is the trivial automorphism and $\diste(\mu_t(K_{1,m}))\le r$.

    To show that $\diste(\mu(K_{1,m})) = r$, we assume for the sake of a contradiction that there's a distinguishing coloring with $s$ colors such that $s<r$. Let  $P'=\{p_1,p_2,\dots p_{s^2}\}$ be the set of all ordered pairs of $s$ colors. Since $s<r$, and $r$ is least such that $r^2\ge m+1$, we know a$s^2\le m$.
    
    First we'll show $s^2\ge m$. 
   For each $1\le j\le t$ and $1\le i \le n$,  $N(u^j_i) = \{u^{j+1}_0, u^{j-1}_0\}$.  Similarly, for $1\le i\le n$, $N(u^0_i) = \{u^{1}_0, u^{0}_0\}$ and $N(u^t_i) = \{w, u^{t-1}_0\}$. Hence, for a fixed $j$, and all $1\le a,b \le n$, $u_a^j$ and $u_b^j$ are twins. By Lemma~\ref{lem:twincolors}, 
    every edge pair in $L_j$ must be colored with a different color pair. There are $m$ edge pairs in $L_t$ and $s^2$ color pairs, so $s^2\ge m$. 

    Suppose $s^2=m$. In this case all color pairs must be used to every color the edge pairs of each $L_j$ for $0\le j \le t$. 
    Let $\mathcal{L}_j$ be the set of vertices $\{u^j_i \mid 1\le i\le n\}$. for every $j$, each vertex in $\mathcal{L}_j$ is the same distance from $w$ so we define $d(\mathcal{L}_j, w)$ to be this distance.  We will consider the cases for $t$ even and $t$ odd separately.
    
    When $t$ is odd, for each $\mathcal{L}_j$ that is distance $d$ from $w$ there is exactly one other set, call it $\mathcal{L}_j'$, that is distance $d$ from $u^t_0$.  
    Let $\psi$ be the automorphism which swaps $w$ and $u^t_0$ and maps the $i^{th}$ vertex in $\mathcal{L}_j$ to the vertex in $\mathcal{L}_j'$ that has the appropriate colors on its incident edges and complete the automorphism so that distances are preserved. Then $\psi$ is a color preserving automorphism. 
    
    If $t$ is even, then, as seen in Figure~\ref{fig:mu5K13colored}, $\mathcal{L}_0=\mathcal{L}_{0}'$.  Since all $m$ colorings were used, for any edge pair $\epsilon$ in $L_0$ with an ordered pair of different colors, the reverse ordered pair is also assigned to an edge pair $\delta$ in $L_0$. Let $\psi$ be the automorphism that swaps $w$ and $u^t_0$ and for $j\ge 1$, maps the $i^{th}$ vertex in $\mathcal{L}_j$ to the vertex in $\mathcal{L}_j'$ that has the appropriate colors on its incident edges. Let $\psi'$ be the automorphism which swaps the vertices in $\mathcal{L}_0$ such that $\epsilon$ maps to $\delta$ and vice versa. Then $\psi'\psi$ is a color preserving automorphism. 

    So we have shown that we can always construct a non-trivial automorphism $\phi$ that is color preserving when $s^2=m$. Therefore, we cannot make a distinguishing edge coloring with fewer than $r$ colors and $\diste(\mu(K_{1,m})) = r$.
    
     For $m\ge 2$, we know $m^2\ge m+1$ and hence $r=m$ satisfies $r^2\ge m+1$. Since $r$ is least such that $r^2\ge m+1$, $r\le m$ and we conclude that 
     \[ \diste\left(\mu_t(K_{1,m})\right) \le r \le m = \diste(K_{1,m}).\]
\end{proof}

\begin{figure}
    \centering
    \begin{tikzpicture}[style=ultra thick,scale=1.4]
     \draw[blue](126:2)--(54:2);

    \draw[red](270+24*6:2)--(270+24*5:2.22)--(270+24*4:2)--(270+24*3:2.22)--(270+24*2:2)--(270+24:2.22)--(270:2);
    
    \draw[red](270:2)--(270-24:2.22)--(270-24*2:2)--(270-24*3:2.22)--(270-24*4:2)--(270-24*5:2.22)--(270-24*6:2);
        
    \draw[red](270+24*6:2)--(270+24*5:1.42);
    \draw[blue](270+24*5:1.42)--(270+24*4:2);
    \draw[blue](270+24*4:2)--(270+24*3:1.42);
    \draw[red](270+24*3:1.42)--(270+24*2:2);
    \draw[blue](270+24*2:2)--(270+24*1:1.42);
    \draw[red](270+24*1:1.42)--(270:2);

    \draw[blue](270-24*6:2)--(270-24*5:1.42);
    \draw[red](270-24*5:1.42)--(270-24*4:2);
    \draw[blue](270-24*4:2)--(270-24*3:1.42);
    \draw[red](270-24*3:1.42)--(270-24*2:2);
    \draw[blue](270-24*2:2)--(270-24*1:1.42);
    \draw[red](270-24*1:1.42)--(270:2);
        
    \draw[blue](270+24*6:2)--(270+24*4:2)--(270+24*2:2)--(270:2)--(270-24*2:2)--(270-24*4:2)--(270-24*6:2);

        \foreach \a in {-5,-3,-1,1,3,5}{
           \fill(270+24*\a:1.83) circle (2.3pt);
           \fill(270+24*\a:2.22) circle (2.3pt);
           \fill(270+24*\a:1.43) circle (2.3pt);
           }
        
    \fill(126:2) circle (2.3pt) node [anchor=south] {$u_0^{5}$};
    \fill(126+24*2:2) circle (2.3pt) node [anchor= east] {$u_0^3$};
    \fill(126+24*4:2) circle (2.3pt) node [anchor= east] {$u_0^1$};
    
    \fill(270:2) circle (2.3pt) node [anchor=north] {$u_0^0$};
    
    \fill(54-24*4:2) circle (2.3pt) node [anchor= west] {$u_0^2$};
    \fill(54-24*2:2) circle (2.3pt) node [anchor= west] {$u_0^4$};
    \fill(54:2) circle (2.3pt) node [anchor=south] {$w$};
    \begin{scope}[shift={(6,0.42)}]
    \draw[black] (1.4,1.2)--(-1.4,1.2);
    
    \draw[black] (1.4,1.2)--(0:0.6);
    \draw[black] (1.4,-1.2)--(0:0.6);

    \draw[][blue] (1.4,1.2)--(0:0.6+0.4*1);
    \draw[][blue] (1.4,-1.2)--(0:0.6+0.4*1);

    \draw[][blue] (1.4,1.2)--(0:0.6+0.4*2);
    \draw[black] (1.4,-1.2)--(0:0.6+0.4*2);
    
    \draw[black] (1.4,1.2)--(0:0.6+0.4*3);
    \draw[][blue] (1.4,-1.2)--(0:0.6+0.4*3);
    
    \draw[black] (1.4,1.2)--(0:0.6+0.4*4);
    \draw[red] (1.4,-1.2)--(0:0.6+0.4*4);

    \filldraw (0:0.6) circle (1.8pt) node [anchor= west] {};
    \filldraw (0:0.6+0.4*1) circle (1.8pt) node [anchor= west] {};
    \filldraw (0:0.6+0.4*2) circle (1.8pt) node [anchor= west] {};
    \filldraw (0:0.6+0.4*3) circle (1.8pt) node [anchor= west] {};
    \filldraw (0:0.6+0.4*4) circle (1.8pt) node [anchor= west] {};

    \draw[black] (-1.4,1.2)--(180:0.6);
    \draw[black] (-1.4,-1.2)--(180:0.6);

    \draw[][blue] (-1.4,1.2)--(180:0.6+0.4*1);
    \draw[][blue] (-1.4,-1.2)--(180:0.6+0.4*1);

    \draw[][blue] (-1.4,1.2)--(180:0.6+0.4*2);
    \draw[black] (-1.4,-1.2)--(180:0.6+0.4*2);
    
    \draw[black] (-1.4,1.2)--(180:0.6+0.4*3);
    \draw[][blue] (-1.4,-1.2)--(180:0.6+0.4*3);
    
    \draw[red] (-1.4,1.2)--(180:0.6+0.4*4);
    \draw[red] (-1.4,-1.2)--(180:0.6+0.4*4);
   
    \filldraw (180:0.6) circle (1.8pt) node [anchor= west] {};
    \filldraw (180:0.6+0.4*1) circle (1.8pt) node [anchor= west] {};
    \filldraw (180:0.6+0.4*2) circle (1.8pt) node [anchor= west] {};
    \filldraw (180:0.6+0.4*3) circle (1.8pt) node [anchor= west] {};
    \filldraw (180:0.6+0.4*4) circle (1.8pt) node [anchor= west] {};

    \draw[black] (-1.4,-1.2)--(270:0.6);
    \draw[black] (1.4,-1.2)--(270:0.6);

    \draw[][blue] (-1.4,-1.2)--(270:0.6+0.4*1);
    \draw[][blue] (1.4,-1.2)--(270:0.6+0.4*1);

    \draw[][blue] (-1.4,-1.2)--(270:0.6+0.4*2);
    \draw[black] (1.4,-1.2)--(270:0.6+0.4*2);
    
    \draw[black] (-1.4,-1.2)--(270:0.6+0.4*3);
    \draw[][blue] (1.4,-1.2)--(270:0.6+0.4*3);
    
    \draw[red] (-1.4,-1.2)--(270:0.6+0.4*4);
    \draw[red] (1.4,-1.2)--(270:0.6+0.4*4);
    
    \filldraw (0:0.6) circle (1.8pt) node [anchor= west] {};
    \filldraw (0:0.6+0.4*1) circle (1.8pt) node [anchor= west] {};
    \filldraw (0:0.6+0.4*2) circle (1.8pt) node [anchor= west] {};
    \filldraw (0:0.6+0.4*3) circle (1.8pt) node [anchor= west] {};
    \filldraw (0:0.6+0.4*4) circle (1.8pt) node [anchor= west] {};

    \filldraw (-1.4,-1.2) circle (1.8pt) node [anchor=north] {$u^0_0$};
    \filldraw (1.4,-1.2) circle (1.8pt) node [anchor=north] {$u^1_0$};
    \filldraw (1.4,1.2) circle (1.8pt) node [anchor=west] {$w$};
    \filldraw (-1.4,1.2) circle (1.8pt) node [anchor=east] {$u^2_0$};

    \filldraw (270:0.6) circle (1.8pt) node [anchor= west] {};
    \filldraw (270:0.6+0.4*1) circle (1.8pt) node [anchor= west] {};
    \filldraw (270:0.6+0.4*2) circle (1.8pt) node [anchor= west] {};
    \filldraw (270:0.6+0.4*3) circle (1.8pt) node [anchor= west] {};
    \filldraw (270:0.6+0.4*4) circle (1.8pt) node [anchor= west] {};
    \end{scope}
    \end{tikzpicture}
    
    \caption{Distinguishing colorings of $\mu_5(K_{1,3})$ and $\mu_2(K_{1,5})$ using method described in proof of Theorem \ref{thm:mutstars}. On the left, $m=3$ so $r=2$ and the set $P = \{(\text{red},\text{red}), (\text{blue}, \text{blue}), (\text{red},\text{blue}), (\text{blue},\text{red})\}$. On the right, $m=5$ so $r=3$ and the set $P= \{(\text{red},\text{red}), (\text{blue}, \text{black}), (\text{black},\text{blue}), (\text{blue},\text{blue}), (\text{black}, \text{black})\}$.}
    \label{fig:mu5K13colored}
\end{figure}
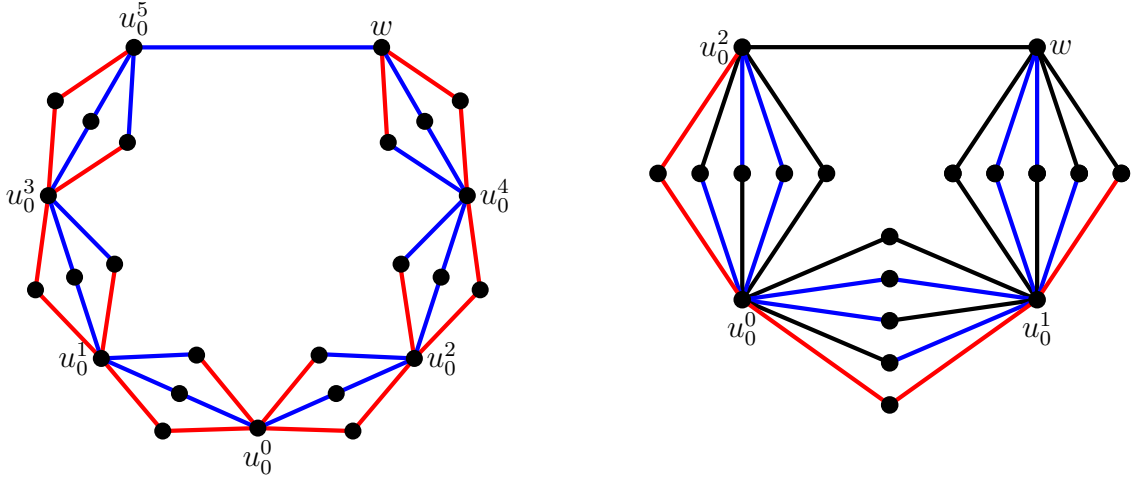
\subsection{Not Star Graphs}
In this section we will show that $\diste(\mu_t(G)) \le \diste(G)$ for all graphs that are not star graphs and for which $\diste(G)$ and $\diste(\mu_t(G))$ are defined. Note that when $t\ge 2$, an isolated vertex in $G$ will result in $2$ isolated vertices in $\mu_t(G)$. Since isolated vertices can be swapped in any color-preserving automorphism, $\diste(\mu_t(G))$ would be undefined. So in this section, we insist $G$ has no connected $K_2$ component and no isolated vertices. 

\begin{thm} \label{thm:notstarst} For any graph $G$ with $|V(G)| \ge 3$, no connected $K_2$ component, no isolated vertices, and $G\neq K_{1,m}$ for any $m$, then $\diste(\mu_t(G))\leq \diste(G)$.
\end{thm}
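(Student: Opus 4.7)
My plan is to imitate the proof of Theorem~\ref{thm:notstars} by constructing a ``lifted'' coloring of $\mu_t(G)$ from a distinguishing edge coloring $c$ of $G$ with $q=\diste(G)$ colors. Concretely, for each edge $v_iv_k$ of $G$, set $\cbar(u_i^0u_k^0)=\cbar(u_i^ju_k^{j+1})=\cbar(u_k^ju_i^{j+1})=c(v_iv_k)$ for every $0\le j<t$, and set $\cbar(u_i^tw)=1$ for all $i$. This uses at most $q$ colors, so it suffices to verify that $\cbar$ is distinguishing.

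Let $\phi$ be a color-preserving automorphism of $\mu_t(G)$. Since $G\ne K_{1,m}$, Lemma~\ref{lem:autost} gives $\phi(w)=w$. Since $G$ has no isolated vertices and at least one edge, a routine distance computation shows each $u_i^j$ is at distance exactly $t-j+1$ from $w$, so the $t+1$ levels of $\mu_t(G)$ are fixed setwise by $\phi$. The subgraph induced on level $0$ is a copy of $G$ on which $\cbar$ restricts to $c$, so $\phi$ acts as a color-preserving automorphism of $(G,c)$ on level $0$ and hence fixes level $0$ pointwise.

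The main step is induction on $j$: assuming level $j-1$ is fixed pointwise, I show level $j$ is also fixed pointwise. Let $\phi(u_i^j)=u_{i'}^j$. The neighbors of $u_i^j$ on level $j-1$ are exactly $\{u_k^{j-1}:v_k\in N_G(v_i)\}$, and since level $j-1$ is fixed pointwise these must coincide with the corresponding neighborhood of $u_{i'}^j$, forcing $N_G(v_i)=N_G(v_{i'})$. If $v_i\ne v_{i'}$, then $v_i$ and $v_{i'}$ are twins in $G$, and Lemma~\ref{lem:twincolors} supplies some $v_k$ in the common neighborhood with $c(v_iv_k)\ne c(v_{i'}v_k)$; but $\phi$ then maps the edge $u_i^ju_k^{j-1}$ (of color $c(v_iv_k)$) to $u_{i'}^ju_k^{j-1}$ (of color $c(v_{i'}v_k)$), contradicting color preservation. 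So $v_i=v_{i'}$, completing the inductive step.

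The main obstacle is the handling of twins; twin vertices of $G$ generate twins at every level of $\mu_t(G)$, and without Lemma~\ref{lem:twincolors} combined with the inductive fact that level $j-1$ is already fixed pointwise, one could not distinguish twin shadows on level $j$ from the lifted coloring alone. Once the induction is complete, $\phi$ fixes $w$ and every $u_i^j$, so $\phi$ is the identity and $\diste(\mu_t(G))\le q=\diste(G)$.
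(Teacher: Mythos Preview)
Your proof is correct and follows essentially the same approach as the paper: lift a distinguishing edge coloring of $G$ to $\mu_t(G)$, use Lemma~\ref{lem:autost} to fix $w$, use distances from $w$ to fix the levels setwise, fix level $0$ pointwise because $\cbar$ restricts to $c$, and then induct up the levels using Lemma~\ref{lem:twincolors} to handle twins. Your write-up is slightly more explicit than the paper's in two places (you specify $\cbar(u_i^tw)=1$ and you spell out the distance $d(u_i^j,w)=t-j+1$), but the argument is the same.
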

\begin{proof}
     Let $|V(G)| = n$ and define $V(\mu_t((G))=\{u^j_1, u^j_2,\dots, u^j_n: 0\le j\le t\}\cup \{w\}$ where $w$ is the root. Let $c$ be a distinguishing coloring of $E(G)$ with colors $1, \dots , q$. We define a coloring of $E(\mu_t(G))$ called $\cbar$. For an edge $u_i^0 u_k^0\in E(G)$, we let $\cbar(u_i^0u_k^0)=\cbar(u_i^ju_k^{j-1})=\cbar(u_k^ju_i^{j-1}) = c(u_i^0u_k^0)$ for $1 \le j \le t$. That is, we mimic the coloring $c$ so that edges between layers are colored with the same color as the edge that produced them.

    Let $\phi$ be a color-preserving automorphism. We will show $\phi$ is trivial. To start, since $G\neq K_{1,m}$ for any $m$ we know by Lemma~\ref{lem:autost} that $w$ is fixed. Assume for the sake of a contradiction that 
    $\phi(u_i^j)\ne u_i^j$ for some $1\le i\le n$ and $0\le j\le t$. Since $w$ is fixed each level of $\mu_t(G)$ is fixed setwise. Because the levels are fixed setwise and $\cbar$ restricts to a distinguishing edge coloring of $G$, $\phi(u_i^0) = u_i^0$ for $1\le i\le n$. That is, the vertices on the zeroth level are fixed.
    Without loss of generality, assume $j\ge 1$ is least such that a vertex on level $j$ is not fixed. That is, assume if $0\le \ell <j$ then $\phi(u_i^\ell) = u_i^\ell$ for all $1\le i \le n$.
    Each non-twin vertex on level $j$ in $\mu_t(G)$ has a unique neighborhood on level $j-1$ and that neighborhood is fixed by the minimality of $j$. Hence $u_i^j$ must be a twin vertex and must map to a twin on the same level.
    
     Suppose $\phi(u_i^j)=u_k^j$.  So $N(u_i^j) = N(u_k^j)$ and, by the construction of a Mycielskian, $N(u_k^0)= N(u_j^0)$. By Lemma \ref{lem:twincolors} we know that there exists some $u_\ell^0 \in N(u_i^0)= N(u_k^0)$ such that $c(u_i^0u_\ell^0) \ne c(u_k^0u_\ell^0)$. Moreover, because of the choice of coloring we know $\cbar(u_\ell^{j-1}u_i^j) \ne \cbar(u_\ell^{j-1}u_k^j)$. Since vertices on level $j-1$ are fixed we cannot have $\phi(u_i^j)=u_k^j$. Thus $\phi$ must be the trivial automorphism and there exists a distinguishing coloring of $\mu_t(G)$ with $\diste(G)$ colors. So, $\diste(\mu_t(G)) \le \diste(G)$.
\end{proof}

Theorems~\ref{thm:mutstars} and \ref{thm:notstarst} show that Conjecture~\ref{conj:AS} remains true for generalized Mycielskians. We state this result in the following theorem.

\begin{thm}\label{thm:mutproof}
For all graphs $G$ with $|V(G)|\ge 3$, no isolated vertices, and no connected $K_2$ component and for all $t\in\mathbb{N}$
    \[\diste(\mu_t(G))\le \diste(G).\]
\end{thm}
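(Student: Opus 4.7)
The plan is to derive Theorem~\ref{thm:mutproof} directly from the two preceding theorems by a case analysis on whether $G$ is a star graph. The argument parallels how Corollary~\ref{cor:muproof} was obtained from Theorems~\ref{thm:smallmuK1m} and \ref{thm:notstars}.

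First, I would observe that under the hypotheses $|V(G)|\ge 3$, no connected $K_2$ component, and no isolated vertices, the only star graphs in play are $K_{1,m}$ with $m\ge 2$: $K_{1,0}$ is a single isolated vertex and $K_{1,1}=K_2$ is itself a $K_2$ component, so both are excluded. Hence, if $G = K_{1,m}$ for some $m$, then $m\ge 2$ and Theorem~\ref{thm:mutstars} applies directly to give $\diste(\mu_t(K_{1,m})) \le \diste(K_{1,m})$.

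Second, if $G\neq K_{1,m}$ for any $m$, then since $|V(G)|\ge 3$, $G$ has no connected $K_2$ component, and $G$ has no isolated vertices, all hypotheses of Theorem~\ref{thm:notstarst} are satisfied, yielding $\diste(\mu_t(G)) \le \diste(G)$. Together these two cases cover every graph $G$ satisfying the stated hypotheses.

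There is essentially no obstacle here: the substantive work has already been done in the proofs of Theorems~\ref{thm:mutstars} and \ref{thm:notstarst}. The only point worth being explicit about is why the hypothesis on isolated vertices is stricter than in Corollary~\ref{cor:muproof}; when $t\ge 2$, a single isolated vertex of $G$ generates $t+1$ isolated vertices in $\mu_t(G)$, and so $\diste(\mu_t(G))$ would be undefined. Forbidding isolated vertices in $G$ outright rules this out and matches the standing assumption imposed at the start of Section~3.
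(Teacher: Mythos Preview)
Your proposal is correct and matches the paper's approach exactly: the theorem is presented there as the immediate joint consequence of Theorems~\ref{thm:mutstars} and \ref{thm:notstarst}, with no additional argument beyond the case split you describe. One small slip in your explanatory aside: an isolated vertex of $G$ yields $t$ (not $t+1$) isolated vertices in $\mu_t(G)$, since its level-$t$ copy is adjacent to $w$; this does not affect the proof.
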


As mentioned in the introduction, the Mycielskian of a graph was first introduced to show that there exist triangle-free graphs of arbitrarily large chromatic numbers. To do so, Mycielski showed that the construction increases the chromatic number while preserving the property of being triangle-free. Iterating the construction on $K_2$ gives the result. We use $\mu^p(G)$ to denote the Mycielskian of $G$ iterated $p$ times and $\mu_t^p(G)$ to represent the generalized Mycielskian of $G$ with $t$ levels, iterated $p$ times. More precisely, $\mu_t^1(G) = \mu_t(G)$ and $\mu_t^{p}(G) = \mu_t(\mu_t^{p-1}(G))$.  Corollary~\ref{cor:iterated} follows from Theorem~\ref{thm:mutproof}.

\begin{cor}\label{cor:iterated} For all graphs $G$ with $|G|\geq 3$, $t \geq 1$, and $p \geq 1$,
\begin{align*}
    \diste(\mu^p(G)) \le \diste(G)
\end{align*}
and
\begin{align}
    \diste(\mu^p_t(G))\leq \diste(G). \nonumber
\end{align}
\end{cor}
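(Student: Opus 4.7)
The plan is to prove both inequalities by induction on $p$, applying Corollary~\ref{cor:muproof} at each step for the first and Theorem~\ref{thm:mutproof} at each step for the second. The base case $p=1$ in each case is exactly the statement of the corresponding result, so the entire content of the argument lies in the inductive step. Assuming the result at $p-1$ and setting $H := \mu^{p-1}(G)$ (respectively $\mu_t^{p-1}(G)$), I would apply the appropriate theorem to $H$ to obtain $\diste(\mu(H)) \le \diste(H)$ (respectively $\diste(\mu_t(H)) \le \diste(H)$) and chain with the inductive hypothesis to conclude.

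For this chaining to be valid, $H$ must satisfy the hypotheses of the theorem being reapplied: $|V(H)| \ge 3$, no connected $K_2$ component, and the appropriate condition on isolated vertices. The vertex-count bound is immediate since one step of either construction strictly grows $|V|$ beyond $3$. The absence of a $K_2$ component is also routine: the root introduced at each Mycielski step is adjacent to every top-layer shadow and so has degree $|V(G)| \ge 3$, ruling out $w$ as a member of a $K_2$ component; a brief case analysis on the remaining vertex types (originals and shadows at each level) confirms that none of them can be in a $K_2$ component either, since each has at least two neighbors coming from the Mycielski construction.

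The isolated-vertex bookkeeping is the one place where the two parts of the corollary genuinely diverge, and is the main (essentially only) obstacle. For $\mu$, an isolated vertex $v_i$ of $G$ produces exactly one isolated vertex in $\mu(G)$, since the shadow $u_i$ is still attached to $w$; hence the hypothesis ``at most one isolated vertex'' of Corollary~\ref{cor:muproof} is preserved under one step. For $\mu_t$ with $t \ge 2$, however, an isolated $v_i$ would produce the isolated shadows $u_i^0, u_i^1, \ldots, u_i^{t-1}$, violating the stronger hypothesis of Theorem~\ref{thm:mutproof} after a single step, which is precisely why that theorem forbids isolated vertices outright. The key verification is therefore that if $G$ has no isolated vertex then neither does $\mu_t(G)$: every shadow $u_i^j$ on a nonzero level inherits a neighbor $u_k^{j-1}$ on the adjacent level from any edge $v_iv_k$ of $G$, and every $u_i^0$ inherits the $G$-neighborhood of $v_i$. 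Once this propagation is in hand, each inductive step reduces to a single application of the previously established theorem, and both claims of Corollary~\ref{cor:iterated} follow.
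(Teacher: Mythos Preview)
Your proposal is correct and matches the paper's approach: the paper simply asserts that the corollary follows from Theorem~\ref{thm:mutproof}, and your induction on $p$ (together with the hypothesis-propagation checks on vertex count, $K_2$ components, and isolated vertices) is exactly the argument that makes that assertion rigorous. Your care with the isolated-vertex bookkeeping, in particular the distinction between the $\mu$ case (one isolated vertex is preserved) and the $\mu_t$ case for $t\ge 2$ (no isolated vertex must be assumed and is then preserved), is more explicit than anything the paper writes out, but it is the intended content of ``follows from Theorem~\ref{thm:mutproof}.''
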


\bibliographystyle{amsplain}
\bibliography{DisteMycielskiBib}

\end{document}